\theoremstyle{definition}
\newtheorem{theorem}{Theorem}
\theoremstyle{remark}
\newcommand{\refe}[1] {(\ref{#1})}
\newcommand{\pp}[1] { \left( {#1} \right)  }
\newcommand{\pr}[1] { \left[ {#1} \right]  }
\def \dd {\,\mathrm{d}} % for dx in integrals
\def\Ow{\mathrm{Ow}} %for Owen extension
\def\Bz{\mathrm{B}} %for Banzhaf value
\def\Lo{\mathrm{Lo}} %for Lovasz extension
\def \EE {\mathbb{E}} % expected value
\def \Var {\mathrm{Var}} % variance
\def \RR {\mathbb{R}}
\def \0{{\mathbf{0}}}
\begin{document}

\title{A note on the Sobol' indices and interactive criteria}
\author{Michel GRABISCH${}^1$\thanks{Corresponding author.} and Christophe LABREUCHE${}^2$\\
\normalsize ${}^1$ Paris School of Economics, University of Paris I\\
\normalsize 106-112, Bd de l'H\^opital, 75013 Paris, France\\
\normalsize \tt michel.grabisch@univ-paris1.fr\\
\normalsize ${}^2$ Thales Research and Technology\\
\normalsize 1, Avenue Augustin Fresnel, 91767 Palaiseau, France\\
\normalsize \tt christophe.labreuche@thalesgroup.com}

\date{}
\maketitle

\begin{abstract}
The Choquet integral and the Owen extension (or multilinear extension) are the
most popular tools in multicriteria decision making to take into account the
interaction between criteria. It is known that the interaction transform and the
Banzhaf interaction transform arise as the average total variation of the
Choquet integral and multilinear extension respectively. We consider in this
note another approach to define interaction, by using the Sobol' indices which
are related to the analysis of variance of a multivariate model. We prove that
the Sobol' indices of the multilinear extension gives the square of the Fourier
transform, a well-known concept in computer sciences. We also relate the latter
to the Banzhaf interaction transform  and compute the Sobol' indices for the
2-additive Choquet integral.
\end{abstract}
{\bf Keywords:} capacity, interaction index, Sobol' index, multilinear extension,
  Fourier transform

\section{Introduction}
In multicriteria decision making, the Choquet integral with respect to a
capacity has become a popular tool to model situations where some interaction
exists between criteria \cite{grla07b}. This often happens in practice, as the
evaluation of an alternative under several criteria is a complex process, where
the level of importance of criteria generally depends on which criteria are
satisfied or not. 

The basic ingredient to model interaction is the capacity (or fuzzy measure)
defined on the set $N$ of criteria, through its interaction transform
\cite{gra96f}, which is a generalization of the Shapley value. Another type of
interaction has been introduced by Roubens \cite{rou96} under the name \textit{Banzhaf
  interaction transform}, and extends the Banzhaf value. So far, emphasis has
been put on the former one in theoretical developments and applications. 

It is well known that the Choquet integral is an extension of a capacity, seen
as a pseudo-Boolean function, and called the Lov\'asz extension. Another popular
extension of pseudo-Boolean functions is the Owen extension or multilinear
extension, known in Multi-Attribute Utility Theory from a long time ago
\cite{kera76}. Both can be considered as aggregation functions on a bounded
closed domain (say, $[0,1]^n$). Defining the interaction index for
$S\subseteq N$ w.r.t. an aggregation function $F$ on $[0,1]^n$ as the average of the
total variation of $F$ w.r.t. the coordinates in $S$, Grabisch et
al. \cite{grmaro99a} showed that the interaction transform corresponds to the
interaction index w.r.t. the Choquet integral, while the Banzhaf interaction
transform is the interaction index w.r.t. the multilinear extension. 

The present note aims to add another view of interaction to the picture, namely
the statistical view, and to relate it to a well-known transform used in
computer sciences but so far ignored in the field of decision making, which is
the Fourier transform. We show that considering the aggregation model as a
multivariate function and defining the interaction index of $S\subseteq N$
through the Sobol' index of $S$ (similar to the variance), we come up with the
square of the Fourier transform when the aggregation function is the multilinear
model (Theorem~1).  We also show the close relation between the Fourier transform and the
Banzhaf interaction transform,  and compute the Sobol' indices for the 2-additive
Choquet integral (Theorem~2).

Throughout the paper, cardinalities of sets $S,T,\ldots$ are denoted by the
corresponding small letters $s,t,\ldots$.

\section{Basic notions}
We consider throughout a finite set $N=\{1,\ldots, n\}$. We often denote
cardinality of sets $S,T,\ldots$ by corresponding small letters $s,t,\ldots$.

A \textit{set function} is a mapping $\xi:2^N\rightarrow\RR$. A \textit{game}
$v$ is set function vanishing on the empty set: $v(\varnothing)=0$. A
\textit{capacity} \cite{cho53} or \textit{fuzzy measure} \cite{sug74} $\mu$ is a
game satisfying monotonicity: for every $S,T\in 2^N$ such that $S\subseteq T$,
we have $\mu(S)\leqslant \mu(T)$.

Clearly, the set $\RR^{2^N}$ of set functions on $N$ is a $2^n$-dimensional
vector space. We introduce on $\RR^{2^N}$ the following inner product:
\[
\langle \xi,\xi'\rangle  = \frac{1}{2^n}\sum_{S\subseteq N}\xi(S)\xi'(S).
\]

A classical basis of $\RR^{2^N}$ is the basis of the
unanimity games. For any nonempty subset $S\subseteq N$, the \textit{unanimity
  game centered on $S$} is the game defined by
\[
u_S(T) = \begin{cases}
  1, & \text{if } T\supseteq S\\
  0, & \text{otherwise}
  \end{cases}.
\] 
Defining the set function $u_\varnothing(S) = 1$ for every $S\subseteq N$, we
obtain a basis of set functions. A drawback of this basis is that it is not orthogonal
w.r.t. the above inner product. We will introduce later an orthonormal basis.

It is well known that the coordinates of $\xi$ in this basis are the
  \textit{M\"obius transform} coefficients:
\[
\xi = \sum_{S\in 2^N}m^\xi(S) u_S
\]
with
\[
m^\xi(S) = \sum_{T\subseteq S}(-1)^{|S\setminus T|}\xi(T).
\]
The M\"obius transform (or \textit{M\"obius inverse}) is a fundamental notion in
combinatorics (see, e.g.,  \cite{rot64}).

There is another vision of set functions, namely the pseudo-Boolean functions
\cite{haru68}, noting that any subset $A$ of $N$ can be encoded by its
characteristic function $1_A$. Formally, a \textit{pseudo-Boolean function} is a
mapping $f:\{0,1\}^n\rightarrow \RR$.  If follows that the set of pseudo-Boolean
functions of $n$ variables is a $2^n$-dimensional vector space, with inner
product
\[
\langle f,f'\rangle  = \frac{1}{2^n}\sum_{x\in\{0,1\}^n}f(x)f'(x).
\]

The standard polynomial expression of a pseudo-Boolean function $f$ is
\begin{equation}\label{eq:2.pbf1}
f(x) = \sum_{A\subseteq N}f(1_A)\prod_{i\in A}x_i\prod_{i\in A^c}(1-x_i),
\end{equation}
for every $x\in\{0,1\}^n$, and with the convention
$\prod_{i\in\varnothing}x_i=1$. Rearranging terms, we get a sum of monomials:
\begin{equation}\label{eq:2.pbf2}
f(x) = \sum_{T\subseteq N}a_T\prod_{i\in T}x_i,
\end{equation}
for every $x\in\{0,1\}^n$, where the coefficients $a_T$ form the M\"obius
transform of $\xi_f$, the set function associated to $f$. Indeed, observe that
unanimity games $u_S$ correspond to monomials $\prod_{i\in S}x_i$.

Starting from (\ref{eq:2.pbf2}), the \textit{Owen extension} \cite{owe72} or
\textit{multilinear extension} is obtained by letting $x$ vary in $[0,1]^N$:
 \[
f^\Ow(x) = \sum_{T\subseteq N}a_T\prod_{i\in T}x_i \qquad (x\in [0,1]^N).
\]
Another possible extension is obtained by remarking that in (\ref{eq:2.pbf2}),
the product can be replaced by the minimum without changing the value of the
function. Letting again $x$ to vary in $[0,1]^N$, we obtain the \textit{Lov\'asz
  extension} \cite{lov83}:
\begin{equation}\label{eq:2.lovext}
f^\Lo(x) = \sum_{T\subseteq N}a_T\bigwedge_{i\in T}x_i \qquad (x\in[0,1]^N).
\end{equation}
The Lov\'asz extension coincides in fact with the Choquet integral
\cite{cho53}.

\section{Transforms of set functions}
The M\"obius transform presented above is an example of a linear and invertible
transform on the set of set functions, in the sense that to each set function
$\xi$ is associated another set function $m^\xi$, called the M\"obius transform
of $\xi$, being linear because $m^{\xi+\alpha\xi'} = m^\xi + \alpha m^{\xi'}$
for every $\xi,\xi'$ and $\alpha\in \RR$, and $\xi$ can be recovered from $m^\xi$
by the inverse transform:
\[
\xi(S) = \sum_{T\subseteq S}m^\xi(T) \qquad (S\in 2^N).
\]

\subsection{Interaction transforms}
Other such transforms exist, and three of them are of importance in this
paper. The first one is the \textit{interaction} transform \cite{gra96f}, defined by
\[
I^\xi(S)  =\sum_{T\subseteq N\setminus
S}\frac{(n-t-s)!t!}{(n-s+1)!}\sum_{L\subseteq S}(-1)^{|S\setminus L|}\xi(T\cup
L), 
\]
and the inverse relation is given by
\[
\xi(S) = \sum_{K\subseteq N}\beta_{|S\cap K|}^{|K|} I^\xi(K),
\]
where
\[
\beta_k^l = \sum_{j=0}^k\binom{k}{j}B_{l-j} \qquad (k\leq l),
\]
and $B_0,B_1,\ldots$ are the Bernoulli numbers.  Also, the interaction transform
has a simple expression in terms of the M\"obius transform:
\begin{equation}\label{eq:im}
I^\xi(S) = \sum_{T\supseteq S}\frac{1}{t-s+1}m^\xi(T).
\end{equation}

The interaction transform is of primary importance in multicriteria decision
making, as it permits to model interaction between criteria \cite{grla07b}.

A similar transform is the \textit{Banzhaf interaction transform} \cite{rou96} defined by
\[
I^\xi_{\mathrm{B}}(S) =  \Big(\frac{1}{2}\Big)^{n-s}\sum_{K\subseteq
  N}(-1)^{|S\setminus K|}\xi(K),
\]
with inverse relation
\begin{equation}\label{eq:invB}
(I_{\mathrm{B}}^{-1})^\xi(S) = \sum_{K\subseteq N}\Big(\frac{1}{2}\Big)^k(-1)^{|K\setminus S|}\xi(K).
\end{equation}
Its expression in terms of the M\"obius transform is
\begin{equation}\label{eq:2.banzhm}
I^\xi_{\mathrm{B}}(S) = \sum_{K\supseteq S}\Big(\frac{1}{2}\Big)^{|K\setminus S|}m^\xi(K),
\end{equation}
and the converse relation is
\begin{equation}\label{eq:2.mbanzh}
m^\xi(S) =\sum_{K\supseteq S}\Big(-\frac{1}{2}\Big)^{|K\setminus S|}I^\xi_{\mathrm{B}}(K).
\end{equation}

 In \cite{grmaro99a}, the following close relation between the two extensions of
a pseudo-Boolean function and the two interaction indices are shown:
\begin{align}
I^\xi_{\mathrm{B}}(S) &= \int_{[0,1]^n}\frac{\partial^s f^\Ow}{\partial x_S}(x)dx\label{eq:IBOw}\\
I^\xi(S) &= \int_{[0,1]^n}\Delta_S f^\Lo(x)dx\label{eq:ILo}
\end{align}
where $f$ is the pseudo-Boolean function corresponding to $\xi$ , $x_S$ is
the restriction of $x$ to coordinates in $S$, and 
\[
\Delta_Sf^\Lo(x) = \sum_{T\supseteq S}m^\xi(T)\bigwedge_{i\in T\setminus S}x_i,
\]
which plays the role of a partial derivative.

\subsection{Fourier transform}
Another transform is the \textit{Fourier transform}, well known in computer
sciences. It is defined as the coordinates of a set function in the basis of the
parity functions. For any subset $S\subseteq N$, the
\textit{parity function} associated to $S$ is the function
\begin{equation}\label{eq:2.par}
\chi_S(x) = (-1)^{1_S\cdot x} = (-1)^{\sum_{i\in S}x_i} \qquad (x\in\{0,1\}^n),
\end{equation}
where $1_S\cdot x$ is the inner product between the two vectors $1_S,x$. The
parity function outputs 1 if the number of variables in $S$ having value 1 is
even, and $-1$ if it is odd. In terms of set functions, the parity function
reads
\[
\chi_S(T) = (-1)^{|S\cap T|}\qquad (T\in 2^{N}).
\]
It can be checked that the set of parity functions forms an orthonormal basis of
$\RR^{2^N}$. 

Let us denote by $\widehat{f}(S)$, $S\subseteq N$, the
coordinates of a pseudo-Boolean function $f$ in the basis of parity functions:
\begin{equation}\label{eq:2.bfou}
f = \sum_{S\subseteq N}\widehat{f}(S)\chi_S.
\end{equation}
The basis being orthonormal, it follows from (\ref{eq:2.bfou}) that
$\widehat{f}(S)$ is simply given by
\begin{equation}\label{eq:2.four}
\widehat{f}(S) = \langle f,\chi_S\rangle
= \frac{1}{2^n}\sum_{x\in\{0,1\}^n}(-1)^{1_S\cdot x}f(x) \qquad (S\subseteq N),
\end{equation}
or, in terms of set functions,
\begin{equation}\label{eq:2.four1}
\widehat{\xi}(S) = \frac{1}{2^n}\sum_{T\subseteq N}(-1)^{|S\cap
T|}\xi(T) \qquad (S\subseteq N).
\end{equation}
The set of coordinates $\{\widehat{f}(S)\}_{S\subseteq N}$ is
the \textit{Fourier transform}.

We establish the relation between the Fourier, M\"obius and Banzhaf
transforms. Taking any set function $\xi$, we have
\begin{align*}
\widehat{\xi}(S) &= \frac{1}{2^n}\sum_{T\subseteq N}(-1)^{|S\cap T|}\xi(T)\\
& = \frac{1}{2^n}\sum_{T\subseteq N}(-1)^{|S\cap
  T|}\sum_{K\subseteq T}m^\xi(K) \\ 
& = \frac{1}{2^n}\sum_{K\subseteq
  N}m^\xi(K)\sum_{T\supseteq K}(-1)^{|S\cap T|}. 
\end{align*}
Now,
\begin{align*}
\sum_{T\supseteq K}(-1)^{|S\cap T|} &= (-1)^{|K\cap S|}2^{n-|K\cup S|} +
(-1)^{|K\cap S|+1}2^{n-|K\cup S|}\binom{|S\setminus K|}{1} + \\
 & \qquad (-1)^{|K\cap S|+2}2^{n-|K\cup S|}\binom{|S\setminus K|}{2} + \cdots +
(-1)^{|S|}2^{n-|K\cup S|} \\ 
 & = (-1)^{|K\cap S|}2^{n-|K\cup S|}\Big(1 - \binom{|S\setminus K|}{1} +
\binom{|S\setminus K|}{2} + \cdots + (-1)^{|S\setminus K|}\Big).
\end{align*}
Observe that 
\[
1 - \binom{|S\setminus K|}{1} +
\binom{|S\setminus K|}{2} + \cdots + (-1)^{|S\setminus K|}=0
\]
except if $|S\setminus K|=0$. It follows that
\begin{align}
\widehat{\xi}(S) &= \frac{1}{2^n}\sum_{K\supseteq S}m^\xi(K)(-1)^{|K\cap
  S|}2^{n-|K\cup S|}\nonumber\\
 & = (-1)^{|S|}\sum_{K\supseteq S}\frac{1}{2^k}m^\xi(K). \label{eq:2.foum}
\end{align}
Now, using (\ref{eq:2.banzhm}), we obtain
\begin{equation}\label{eq:2.fouB}
\widehat{\xi}(S) = \Big(\frac{-1}{2}\Big)^s I^\xi_\Bz(S).
\end{equation}
Lastly, we obtain from (\ref{eq:2.fouB}) and (\ref{eq:2.mbanzh})
\begin{equation}\label{eq:2.mfou}
m^\xi(S) = (-2)^s \sum_{T\supseteq S}\widehat{\xi}(T).
\end{equation}

\section{The Sobol' indices}
In statistics, the analysis of variance (ANOVA) (see, e.g., Fischer and
Mackenzie \cite{fima23}) is a well-known tool to model interaction between
variables in a multivariate model. Consider $n$ independent
random variables $Z_1,\ldots, Z_n$, with uniform distribution on $[0,1]$, and a
multivariate model $Y=F(Z)$, where $Z=(Z_1,\ldots,Z_n)$. Let us denote for
simplicity groups of variables $(Z_i)_{i\in S}$ by $Z_S$, and $Z_{-S}$ denotes
$(Z_i)_{i\not\in S}$. Hence, we may write $Z=(Z_S,Z_{-S})$. Moreover, we denote
by $\EE[Y]$ the expected value of $Y$ taken over all variables $Z_1,\ldots,
Z_n$. 
The expected value of $Y$ can be taken on a subset $Z_S$ of variables, with the corresponding notation $\EE_{Z_S}[Y]$.
% We may specify for clarity on which variables the
% expectation is taken, writing, e.g., $\EE_{Z_i}[Y]$, etc.

Any multivariate function can be decomposed in the following way (ANOVA decomposition):
\[
Y=F(Z) = F_\varnothing+\sum_{i=1}^nF_i(Z_i) + \sum_{i<j}F_{ij}(Z_i,Z_j) + \cdots + F_N(Z) =
 \sum_{S\subseteq N}F_S(Z_S),
\]
with
\begin{align*}
F_\varnothing &=\EE[Y]\\
F_i(Z_i) & = \EE[Y|Z_i] - F_\varnothing\\
F_{ij}(Z_i,Z_j) & = \EE[Y|Z_i,Z_j] - F_i(Z_i) - F_j(Z_j) - F_\varnothing\\ &
= \EE[Y|Z_i,Z_j] - \EE[Y|Z_i]- \EE[Y|Z_j] + E[Y]\\
\vdots & = \vdots\\
F_S(Z_S) & = \EE_{Z_{-S}}[Y|Z_S] - \sum_{T\subset S}F_T(Z_T) = \sum_{T\subseteq
S}(-1)^{|S\setminus T|}\EE_{Z_{-T}}[Y|Z_T]\\
\vdots & = \vdots\\
F_N(Z) & = \sum_{T\subseteq N}(-1)^{|N\setminus T|}\EE_{Z_{-T}}[Y|Z_T].
\end{align*}
The property of this decomposition is that each term has zero mean, except the
first one, $F_\varnothing$. It follows that the variance of $Y$ can be decomposed
as follows:
\[
\Var[Y] = \sum_{\varnothing\neq S\subseteq N}\Var[F_S(Z_S)].
\]
The \textit{first-order Sobol' indices} \cite{sob90,sob93} are the quantities
$\frac{\Var[F_S(Z_S)]}{\Var[Y]}$, although one can omit the normalization
factor. The next theorem establishes the close link between Sobol' indices and
the Fourier transform (and consequently the Banzhaf transform) for the
multilinear model.
\begin{theorem}\label{th:6.sobol} 
Consider the multilinear extension $f^\Ow_\mu$ of a capacity $\mu$. Then the
(nonnormalized) Sobol' index for a subset $\varnothing\neq S\subseteq N$ is
given by
\[
\Var[(f^\Ow_\mu)_S] = \frac{1}{3^s}\big(\widehat{\mu}(S)\big)^2,
\]
where $\widehat{\mu}$ is the Fourier transform of $\mu$. 
Moreover, the ANOVA decomposition takes the following form
\[ f^\Ow_\mu(x) = \sum_{S\subseteq N} (-1)^s \: \prod_{i\in S} (2x_i-1) \times \widehat{\mu}(S) .
\] 
\end{theorem}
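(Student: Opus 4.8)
The plan is to prove the two assertions in the convenient order, namely to establish the explicit form of the ANOVA decomposition first and then to read off the variance of each term. The key observation on which everything rests is that the parity function $\chi_S$, though a priori defined only on the Boolean cube, is the restriction to $\{0,1\}^n$ of a multilinear polynomial. Indeed, for $x_i\in\{0,1\}$ one has $(-1)^{x_i}=1-2x_i$, whence
\[
\chi_S(x)=\prod_{i\in S}(1-2x_i)=(-1)^s\prod_{i\in S}(2x_i-1)\qquad(x\in\{0,1\}^n),
\]
and the right-hand side, regarded as a function on $[0,1]^n$, is affine in each coordinate, i.e.\ multilinear.

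First I would use this to identify the two extensions. By the Fourier decomposition \refe{eq:2.bfou}, the pseudo-Boolean function $f$ associated with $\mu$ satisfies $f=\sum_{S\subseteq N}\widehat{\mu}(S)\chi_S$ on $\{0,1\}^n$. Hence the multilinear polynomial $g(x)=\sum_{S\subseteq N}(-1)^s\,\widehat{\mu}(S)\prod_{i\in S}(2x_i-1)$ agrees with $f$ at every vertex of the cube. Since the multilinear extension is the \emph{unique} multilinear polynomial taking prescribed values on $\{0,1\}^n$, I conclude $f^\Ow_\mu=g$ on all of $[0,1]^n$, which is exactly the announced form of the decomposition.

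Next I would check that this is genuinely the ANOVA decomposition, so as to interpret its summands as the $(f^\Ow_\mu)_S$. Each summand $G_S(x):=(-1)^s\,\widehat{\mu}(S)\prod_{i\in S}(2x_i-1)$ depends only on the coordinates $x_S$, and because $\int_0^1(2z-1)\dd z=0$, integrating $G_S$ over any single coordinate $x_i$ with $i\in S$ gives $0$. These two properties—dependence on $Z_S$ alone and vanishing integral over each individual variable of $S$—characterize the ANOVA terms uniquely, so $(f^\Ow_\mu)_S=G_S$ for every $S$. The variance is then immediate: for $\varnothing\neq S$ the term has zero mean, so by independence of the $Z_i$,
\[
\Var[(f^\Ow_\mu)_S]=\EE[G_S^2]=\widehat{\mu}(S)^2\prod_{i\in S}\int_0^1(2z-1)^2\dd z=\frac{1}{3^s}\,\widehat{\mu}(S)^2,
\]
since $\int_0^1(2z-1)^2\dd z=\tfrac13$.

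The main obstacle, modest as it is, lies in the first step: one must recognize that the Boolean parity basis and the monomial (multilinear) basis are distinct bases whose elements nevertheless coincide as polynomials, and then invoke the uniqueness of the multilinear extension to transport an identity valid only at the vertices $\{0,1\}^n$ to the whole cube $[0,1]^n$. Once this bridge is in place, the orthogonality of the functions $\prod_{i\in S}(2x_i-1)$ does all the remaining work, both for identifying the ANOVA terms and for evaluating their variances; everything else is routine integration.
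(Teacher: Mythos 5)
Your proof is correct, but it takes a genuinely different route from the paper's. The paper computes the ANOVA terms $(f^\Ow_\mu)_S$ head-on from the recursive definition $f_S=\sum_{K\subseteq S}(-1)^k\EE(f\mid Z_{S\setminus K})$: it expands each conditional expectation in the M\"obius basis, performs a combinatorial resummation to isolate $\frac{1}{2^s}\prod_{i\in S}(2z_i-1)$, and only at the last step invokes the Fourier--M\"obius relation \refe{eq:2.foum} to recognize the coefficient as $\widehat{\mu}(S)$. You instead avoid all computation by a double uniqueness argument: the identity $\chi_S(x)=(-1)^s\prod_{i\in S}(2x_i-1)$ on $\{0,1\}^n$ plus uniqueness of multilinear interpolation transports the Fourier expansion from the cube vertices to all of $[0,1]^n$, and the characterization of ANOVA terms (each summand depends only on $x_S$ and integrates to zero in each coordinate of $S$) identifies the summands as the $(f^\Ow_\mu)_S$. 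Your approach is shorter, explains \emph{why} the theorem holds (the multilinear extensions of the parity functions form exactly the orthogonal system underlying ANOVA for the uniform product measure), and would generalize to other product measures by replacing the parity functions with suitable orthogonal polynomials; the paper's approach is self-contained relative to its own definitions and produces the intermediate formula \refe{eq:6.eft} for conditional expectations of the multilinear model. One small point to tighten: the uniqueness property of the ANOVA decomposition that you invoke is stronger than the zero-mean property the paper states, so you should either prove it (a short induction: integrating $\sum_S H_S=0$ over the coordinates outside $T$ kills every term except those with $S\subseteq T$, then induct on $|T|$) or verify directly from the paper's recursive definition that $F_S=\sum_{T\subseteq S}(-1)^{|S\setminus T|}\sum_{W\subseteq T}G_W=G_S$ by M\"obius inversion, using $\EE[G_W\mid Z_T]=G_W$ if $W\subseteq T$ and $0$ otherwise.
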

\begin{proof}
We set for simplicity $f=f^\Ow_\mu$. We compute
\[
f_S = \sum_{K\subseteq S}(-1)^k\EE(f\mid Z_{S\setminus K}) \qquad (S\subseteq N,
|S|>0).
\]
We have for any such $S$:
\begin{align}
\EE(f|Z_{S\setminus K}) &= \int_{[0,1]^{N\setminus (S\setminus K)}}f\dd
z_{N\setminus (S\setminus K)} = \sum_{T\subseteq
  N}m^\mu(T)\int_{[0,1]^{N\setminus (S\setminus K)}} \prod_{i\in T}z_i\dd
z_{N\setminus (S\setminus K)}\nonumber\\
 & =   \sum_{T\subseteq  N}m^\mu(T)\frac{1}{2^{|T\setminus (S\setminus K)|}}\prod_{i\in T\cap
  (S\setminus K)}z_i\nonumber\\
 & = \sum_{L\subseteq N\setminus S}\sum_{T\subseteq S}m^\mu(L\cup
T)\frac{1}{2^{|L\cup (T\cap K)|}}\prod_{i\in T\setminus K}z_i. \label{eq:6.eft}
\end{align}
It follows that
\begin{align*}
f_S &=\sum_{K\subseteq S}(-1)^k\sum_{L\subseteq N\setminus S}\sum_{T\subseteq S}m^\mu(L\cup
T)\frac{1}{2^{|L\cup (T\cap K)|}}\prod_{i\in T\setminus K}z_i\\
 & = \sum_{L\subseteq N\setminus S}\frac{1}{2^l}\sum_{T\subseteq S}m^\mu(L\cup
  T)\sum_{K\subseteq S}(-1)^k\frac{1}{2^{|T\cap K|}}\prod_{i\in T\setminus K}z_i.
\end{align*}
Letting $T'=T\cap K$, we have
\[
\sum_{K\subseteq S}(-1)^k\frac{1}{2^{|T\cap K|}}\prod_{i\in T\setminus K}z_i =
  \sum_{T'\subseteq T}(-1)^{t'}\frac{1}{2^{t'}}\prod_{i\in T\setminus
    T'}z_i\sum_{K'\subseteq S\setminus T}(-1)^{k'}.
\]
Observing that $\sum_{K'\subseteq S\setminus T}(-1)^{k'}=0$ except if
$S\setminus T=\emptyset$, it follows that
\begin{equation}\label{eq:6.fs}
f_S = \sum_{L\subseteq N\setminus S}\frac{1}{2^l}m^\mu(L\cup S)\sum_{T\subseteq
  S}(-1)^t\frac{1}{2^t}\prod_{i\in S\setminus T}z_i.
\end{equation}
Observe that 
\[
\sum_{T\subseteq S}(-1)^t\frac{1}{2^t}\prod_{i\in S\setminus T}z_i =
\frac{1}{2^{s}}\prod_{i\in S}(2z_i-1),
\]
hence we finally get by (\ref{eq:2.foum}):
\begin{equation}\label{eq:6.fsf}
f_S = (-1)^s\prod_{i\in S}(2z_i-1) \widehat{\mu}(S).
\end{equation}
Note that this expression is also true for $S=\emptyset$ as
\begin{align*}
 & f_\emptyset = \int_{[0,1]^N} f(z) \dd z = \sum_{T\subseteq N} m^\mu(T) \int_{[0,1]^N} \prod_{i\in T} z_i \dd z
  = \sum_{T\subseteq N} \frac{m^\mu(T)}{2^t} = \widehat{\mu}(\emptyset) .
\end{align*}  
We obtain finally
\[
\EE[f_S^2] = \int_{[0,1]^S} \Big(\prod_{i\in
S}(2z_i-1) \widehat{\mu}(S)\Big)^2\dd z_S =    \frac{1}{3^s}(\widehat{\mu}(S))^2.
\]
\end{proof}

This result is not very surprising as, historically, Sobol' generalized the Fourier base
to obtain the decomposition underlying the Sobol' indices \cite{sob93}.

Therefore, up to a multiplicative constant depending on the cardinality of the
subset, the Sobol' indices are the coefficients of the square of the Fourier
transform, or, due to (\ref{eq:2.fouB}), of the square of the Banzhaf transform
 (compare with (\ref{eq:IBOw})).

\medskip

The computation of the Sobol' indices for a general Choquet integral is quite complex due to the presence of the minimum operator in (\ref{eq:2.lovext}), and there does not seem to be a compact and appealing expression.
We restrict ourself to a sub-class of capacities -- called $2$ additive -- where all M\"obius coefficient of cardinality strictly greater than $2$ are zero \cite{gra96f}.
The Choquet integral of such capacities therefore becomes (see (\ref{eq:2.lovext}))
\[ f^\Lo(z) = \sum_{i\in N} m(i) \: z_i + \sum_{\{i,j\}\subseteq N} m(i,j) \: z_i \wedge z_j
\]
with the notation $m(i)=m(\{i\})$ and $m(i,j)=m(\{i,j\})$, the M\"obius
transform of the capacity. 

\begin{theorem}\label{th:sobol2} 
Consider the Lov\'asz extension $f^\Lo_\mu$ of a  $2$-additive capacity $\mu$
with M\"obius transform $m$.  
Then $f^\Lo_\mu$ is decomposed in the following terms according to ANOVA
\begin{align}
 &  f^\Lo_\emptyset = \sum_{i\in N} \frac{m(i)}{2} + \sum_{\{i,j\}\subseteq N} \frac{m(i,j)}{3}  \label{ELo1}  \\
 &   f^\Lo_k(z_k) = - \frac{m(k,\cdot)}{2} {z_k}^2 + \pp{m(k)+m(k,\cdot)} z_k - \pp{\frac{m(k)}{2}+\frac{m(k,\cdot)}{3}}  \label{ELo2} \\
 &  f^\Lo_{p,q}(z_p,z_q) = m(p,q) \pp{ -z_p\vee z_q + \frac{{z_p}^2}{2} + \frac{{z_q}^2}{2} + \frac{1}{3}} \label{ELo4}
\end{align}
where $m(k,\cdot) := \sum_{i\in N\setminus\{k\}} m(k,i)$.
Moreover, the (nonnormalized) Sobol' index are given by
\begin{align}
 & \Var[f^\Lo_k] = \frac{m(k)^2}{12} + \frac{m(k,\cdot)^2}{45} + \frac{m(k)m(k,\cdot)}{12}  \label{ELo3} \\
 & \Var[f^\Lo_{p,q}] = \frac{m(p,q)^2}{90}  \label{ELo5}
\end{align}
\end{theorem}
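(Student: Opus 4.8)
The plan is to compute the ANOVA terms directly from their defining conditional expectations, exploiting the fact that $2$-additivity truncates the decomposition at order two, and then to evaluate the two resulting variances as elementary integrals over $[0,1]$ and $[0,1]^2$.

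First I would assemble the building-block expectations for the Lov\'asz extension. Since each $Z_i$ is uniform on $[0,1]$ and the variables are independent, the linear part contributes $\EE[Z_i]=\frac12$, while for the pairwise part the only integrals needed are
\[
\EE[Z_i\wedge Z_j]=\int_0^1\!\!\int_0^1 (x\wedge y)\dd x\dd y=\frac13,
\qquad
\int_0^1 (z_k\wedge y)\dd y = z_k-\frac{z_k^2}{2}.
\]
The first of these yields $f^\Lo_\emptyset$ immediately, giving \eqref{ELo1}. For the first-order term I would write $f^\Lo_k(z_k)=\EE[f^\Lo\mid Z_k]-f^\Lo_\emptyset$: the conditioning leaves $m(k)z_k$ from the linear part and $\sum_{i\neq k}m(k,i)\bigl(z_k-\frac{z_k^2}{2}\bigr)=m(k,\cdot)\bigl(z_k-\frac{z_k^2}{2}\bigr)$ from the pairs meeting $k$, while all terms not involving $k$ collapse to constants already present in $f^\Lo_\emptyset$ and cancel. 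Collecting powers of $z_k$ produces \eqref{ELo2}.

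For the second-order term I would use $f^\Lo_{p,q}=\EE[f^\Lo\mid Z_p,Z_q]-f^\Lo_p-f^\Lo_q-f^\Lo_\emptyset$. The only genuinely joint contribution is $m(p,q)(z_p\wedge z_q)$; after the inclusion--exclusion cancellation the residual takes the form $m(p,q)\bigl(\frac{z_p^2}{2}+\frac{z_q^2}{2}-z_p-z_q+(z_p\wedge z_q)+\frac13\bigr)$. The key algebraic simplification is the identity $z_p+z_q-(z_p\wedge z_q)=z_p\vee z_q$, which rewrites this as $m(p,q)\bigl(\frac{z_p^2}{2}+\frac{z_q^2}{2}-(z_p\vee z_q)+\frac13\bigr)$, yielding \eqref{ELo4}. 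As a consistency check I would verify $\EE[f^\Lo_{p,q}]=0$, as required of every non-empty ANOVA term.

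Finally I would compute the variances as second moments, using $\EE[f^\Lo_S]=0$. The first-order case \eqref{ELo3} is a routine integral of a quadratic in $z_k$, needing only the moments $\EE[z^k]=1/(k+1)$ up to $k=4$. The hard part is the pairwise case \eqref{ELo5}, where the $\vee$ operator forces the integration over $[0,1]^2$ to be split along the diagonal. The nontrivial moments here are $\EE[z_p\vee z_q]=\frac23$, $\EE[(z_p\vee z_q)^2]=\frac12$, and the mixed moment $\EE[z_p^2(z_p\vee z_q)]=\frac{4}{15}$; expanding the square of \eqref{ELo4} and substituting these reduces the whole expression to $m(p,q)^2/90$. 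I expect the mixed moment $\EE[z_p^2(z_p\vee z_q)]$ to be the main computational obstacle, since it alone requires separating the domain into the regions $z_q<z_p$ and $z_q>z_p$ and integrating a cubic on each piece.
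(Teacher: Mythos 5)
Your proposal is correct and follows essentially the same route as the paper: the same conditional expectations ($\EE[Z_i\wedge Z_j]=\tfrac13$, $\EE[Z_k\wedge Z_i\mid Z_k=z_k]=z_k-\tfrac{z_k^2}{2}$), the same key identity $z_p\wedge z_q+z_p\vee z_q=z_p+z_q$ to obtain \eqref{ELo4}, and the same second-moment integrals for the variances (your precomputed moments $\EE[z_p\vee z_q]=\tfrac23$, $\EE[(z_p\vee z_q)^2]=\tfrac12$, $\EE[z_p^2(z_p\vee z_q)]=\tfrac{4}{15}$ all check out and reproduce the paper's diagonal-splitting computation in a slightly repackaged form). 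No gaps; the only cosmetic difference is that the paper expands the squared integrand and splits the domain along $z_q<z_p$ in one go, whereas you isolate the $\vee$-moments as separate lemmas first.
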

\begin{proof}
\[ f^\Lo_\emptyset = \sum_{i\in N} m(i) \: \EE[Z_i] + \sum_{\{i,j\}\subseteq N} m(i,j) \: \EE[Z_i \wedge Z_j]
\]
where
$\EE[Z_i \wedge Z_j] = 2 \int_0^1 \int_0^{z_i} z_i \wedge z_j \: dz_i \: dz_j
 = 2 \int_0^1 \int_0^{z_i} z_j \: dz_i \: dz_j = 2 \int_0^1 \frac{{z_i}^2}{2} \: dz_i = \frac{1}{3}$.
Hence \refe{ELo1} is proved.

\medskip

For $k\in N$,
\[ f^\Lo_k(z_k) = \sum_{i\in N} m(i) \: \pp{\EE[Z_i | Z_k=z_k]-\frac{1}{2}} + \sum_{\{i,j\}\subseteq N} m(i,j) \: \pp{\EE[Z_i \wedge Z_j | Z_k=z_k]-\frac{1}{3}}
\]
We observe that
$\EE[Z_k \wedge Z_i | Z_k=z_k] = \int_0^1 z_k \wedge z_i \: dz_i = \int_0^{z_k} z_i \: dz_i + \int_{z_k}^1 z_k \: dz_i
  = z_k - \frac{{z_k}^2}{2} $. 
Hence
\begin{align*}
  f^\Lo_k(z_k) & = m(k) \: \pp{z_k-\frac{1}{2}} + m(k,\cdot) \: \pp{z_k - \frac{{z_k}^2}{2}-\frac{1}{3}} \\
	 & = - \frac{m(k,\cdot)}{2} {z_k}^2 + \pp{m(k)+m(k,\cdot)} z_k - \pp{\frac{m(k)}{2}+\frac{m(k,\cdot)}{3}} .
\end{align*}
Hence \refe{ELo2} is proved.
Then
\begin{align*}
  \EE\pr{\pp{f^\Lo_k}^2} & = \int_0^1 \left[ m(k)^2 \: \pp{{z_k}^2-z_k+\frac{1}{4}} + m(k,\cdot)^2 \: \pp{{z_k}^2 + \frac{{z_k}^4}{4} + \frac{1}{9} - {z_k}^3 -\frac{2\, z_k}{3} + \frac{{z_k}^2}{3}} \right.   \\
 & \left. + 2 m(k)m(k,\cdot) \: \pp{{z_k}^2-\frac{{z_k}^3}{2} - \frac{z_k}{3}-\frac{z_k}{2}+\frac{{z_k}^2}{4}+\frac{1}{6}} \right] \: dz_k \\
& = \frac{m(k)^2}{12} + \frac{m(k,\cdot)^2}{45} + \frac{m(k)m(k,\cdot)}{12} 
\end{align*}  
Hence \refe{ELo3} holds.

\medskip

Consider now
\begin{align*} 
  f^\Lo_{p,q}(z_p,z_q) & = m(p,q) \pp{ z_p\wedge z_q - \EE[Z_p\wedge Z_q | Z_p=z_p]  - \EE[Z_p\wedge Z_q | Z_q=z_q] - \EE[Z_p\wedge Z_q ] }   \\
 & = m(p,q) \pp{ z_p\wedge z_q - \pp{z_p-\frac{{z_p}^2}{2}}  - \pp{z_q-\frac{{z_q}^2}{2}} + \frac{1}{3}} \\
 & = m(p,q) \pp{ -z_p\vee z_q + \frac{{z_p}^2}{2} + \frac{{z_q}^2}{2} + \frac{1}{3}}  
\end{align*}
as $z_p \wedge z_q + z_p\vee z_q = z_p + z_q$. Hence \refe{ELo4} is proved.
Then
\begin{align*} 
 & \EE\pr{\pp{f^\Lo_{p,q}}^2}  = m(p,q)^2 \int_0^1 \int_0^1 \pp{ -z_p\vee z_q + \frac{{z_p}^2}{2} + \frac{{z_q}^2}{2} + \frac{1}{3}}^2 \:dz_p \: dz_q   \\ 
 & = m(p,q)^2 \int_0^1 \int_0^1 \pp{ \pp{z_p\vee z_q}^2 - (z_p\vee z_q) \pp{{z_p}^2 + {z_q}^2 + \frac{2}{3}} }\:dz_p \: dz_q \\
 & + m(p,q)^2 \int_0^1 \int_0^1 \pp{  \frac{{z_p}^4}{4} + \frac{{z_q}^4}{4} + \frac{{z_p}^2\,{z_q}^2}{2} + \frac{1}{3}\pp{{z_p}^2+{z_q}^2} + \frac{1}{9}} \:dz_p \: dz_q  \\ 
 & = 2 \: m(p,q)^2 \int_0^1 \int_0^{z_p} \pp{ {z_p}^2 - z_p \pp{{z_p}^2 + {z_q}^2 + \frac{2}{3}} } \:dz_p \: dz_q \\
 & + m(p,q)^2 \pp{ \frac{1}{20} + \frac{1}{20} + \frac{1}{18} +\frac{1}{9} + \frac{1}{9} + \frac{1}{9}} \\
 & = 2 \: m(p,q)^2 \pp{ \frac{1}{4} - \frac{1}{5} - \frac{1}{15} - \frac{2}{9}}  + m(p,q)^2 \pp{ \frac{1}{10} + \frac{1}{18} + \frac{1}{3} } = \frac{m(p,q)^2}{90}
\end{align*}  
Hence \refe{ELo5} is proved.
\end{proof}

The interaction indices for the two-additive model are, using (\ref{eq:im})
\begin{align*}
 & I^\mu(k) = m(k) + \frac{m(k,\cdot)}{2} \\
 & I^{\mu}(p,q) = m(p,q)
\end{align*}
There is a clear difference between the Sobol' and the interaction indices, due to the presence of the square in the definition of the Sobol' indices.  
We observe that $I^{\mu}(p,q)$ and $\Var[f^\Lo_{p,q}]$ are both proportional to $m(p,q)$ or its square.
By contrast, $\Var[f^\Lo_k]$ is not proportional to the square of $I^\mu(k)$.
Term $m(k)$ takes more importance in $\Var[f^\Lo_k]$.

The Sobol' and interaction indices are both based on sensitivity analysis, but
performed in a different manner.  The interaction indices consider the average
value of the partial derivative of $f^\Lo$ w.r.t. its components
(see (\ref{eq:ILo})), and is thus the mean value of a local sensitivity analysis.
By contrast, the Sobol' indices comes from a sensitivity analysis based on
variance.  They are used for instance to identify which factors shall be fixed
(the other variables being unknown and governed by uniform distribution) in
order to reduce as much as possible the variance on the output variable.

It is not clear how to compare the interaction indices for singletons and pairs. For instance, if $I^\mu(1) = 2 \: I^{\mu}(1,2)$, does it mean that the interaction between variables $1$ and $2$ is twice more important than the importance of criterion $1$?
Such a comparison is possible with the Sobol' indices as the variance of the output variable is decomposed into the variance of each variable individually, each pair of variables, and so on. 
This allows to compare the Sobol' indices for different terms.
Then if $ \Var[f^\Lo_{1}] = 2\: \Var[f^\Lo_{1,2}]$, then one can say that variable $1$ alone is twice more influential than the interaction between variables $1$ and $2$.

%The first aim of sensitivity analysis is to understand how modifying input parameters influences the output of the model.
%This helps to identify the variables that contribute most on the uncertainty of the model.
%
%The importance of a variable is seen as the contribution of this variable to the variability of the output variable.

\bibliographystyle{plain}
\bibliography{../BIB/fuzzy,../BIB/grabisch,../BIB/general}

\end{document}